\documentclass[11pt]{amsart}
\usepackage[T1]{fontenc}
\usepackage{lmodern}
\usepackage{amsmath,amssymb,mathtools}
\usepackage{enumitem}
\usepackage[numbers,sort&compress]{natbib}
\usepackage[margin=1.02in]{geometry}
\usepackage{microtype}
\usepackage[colorlinks=true,linkcolor=blue,citecolor=blue,urlcolor=blue]{hyperref}

\newtheorem{theorem}{Theorem}[section]
\newtheorem{proposition}[theorem]{Proposition}
\newtheorem{lemma}[theorem]{Lemma}
\newtheorem{corollary}[theorem]{Corollary}
\newtheorem{conjecture}[theorem]{Conjecture}
\theoremstyle{definition}

\theoremstyle{remark}

\newcommand{\R}{\mathbb{R}}
\newcommand{\C}{\mathbb{C}}
\newcommand{\Z}{\mathbb{Z}}
\newcommand{\Hh}{\mathbb{H}}

\newcommand{\dd}{\mathrm{d}}
\newcommand{\PSL}{\operatorname{PSL}}
\newcommand{\supp}{\operatorname{supp}}
\newcommand{\Div}{\operatorname{div}}

\newcommand{\capacity}{\operatorname{Cap}}

\title[Zariski-Dense Monodromy]
{Zariski-Dense Monodromy of Singular Hyperbolic Metrics on Non-Hyperbolic Riemann Surfaces}

\author{Yu Feng}
\address{College of Science, Tianjin University of Science and Technology, Tianjin 300457, China}
\email{yuf@tust.edu.cn}

\author{Yiqian Shi}
\address{CAS Wu Wen-Tsun Key Laboratory of Mathematics and School of Mathematical Sciences, University of Science and Technology of China, Hefei 230026, China}
\email{yqshi@ustc.edu.cn}

\author{Jijian Song}
\address{School of Mathematics and KL-AAGDM, Tianjin University, Tianjin 300350, China}
\email{jijian.song@tju.edu.cn}

\author{Bin Xu}
\address{CAS Wu Wen-Tsun Key Laboratory of Mathematics and School of Mathematical Sciences, University of Science and Technology of China, Hefei 230026, China}
\email{bxu@ustc.edu.cn}

\subjclass[2020]{Primary 30F45; Secondary 30F20, 31A15, 20H10}
\keywords{Singular hyperbolic metric, parabolic Riemann surface, developing map, monodromy group, Zariski density}

\begin{document}

\begin{abstract}
We prove that the monodromy group of every singular hyperbolic metric on a non-hyperbolic Riemann surface, in the sense of potential theory, is Zariski dense in $\PSL(2,\R)$, confirming a conjecture of the authors~\cite{FengShiSongXu2023}. The main new step is to show that a singular hyperbolic metric on an arbitrary parabolic Riemann surface cannot have monodromy contained in a conjugate of the real affine subgroup of $\PSL(2,\R)$. The same argument also gives a direct proof in the compact case. Combined with the nonexistence results for the remaining proper subgroup types in \cite{FengShiSongXu2023}, this proves the conjecture.
\end{abstract}

\maketitle

\section{Introduction}

We study the monodromy of singular hyperbolic metrics on Riemann
surfaces, allowing the surface to be noncompact and the singular set to
be closed and discrete. Building on \cite{FengShiSongXu2023}, we examine
how the potential-theoretic type of the underlying surface constrains
the associated monodromy group.

A singular hyperbolic metric has a developing map whose analytic continuation gives a monodromy representation into $\PSL(2,\R)$, unique up to conjugacy. We prove that if the underlying Riemann surface is compact or parabolic, then its monodromy group is Zariski dense in
$\PSL(2,\R)$. The necessary definitions and background are recalled
below.

\subsection{Singular hyperbolic metrics}

The local classification of isolated singularities of conformal
hyperbolic metrics goes back to Nitsche \cite{Nitsche1957} and Heins
\cite{Heins1962}, who showed that every such singularity is either
conical or cuspidal. An elementary proof based on complex analysis,
together with explicit local normal forms for the metric, was given by
Feng--Shi--Xu \cite{FengShiXu2020}. The same dichotomy can also be
understood from the Higgs-bundle viewpoint. More precisely, Li and
Mochizuki showed that a hyperbolic metric on a punctured disc is mutually
bounded near the puncture with one of the standard cone or cusp models;
see \cite[Lemma~8.3]{LM2020}.

Let $\Sigma$ be a connected Riemann surface, not necessarily compact, and let $$
\mathrm{D}
=
\sum_{i\in I}(\theta_i-1)p_i
$$
be an $\R$-divisor on $\Sigma$, where the points $p_i$ are distinct and
$
\theta_i\geq0,\ 
\theta_i\neq1
$
for every $i\in I$. We assume that
$
\supp\mathrm{D}
:=
\{p_i:i\in I\}
$
is a closed discrete subset of $\Sigma$. In particular, the divisor is
locally finite, although the index set $I$ may be infinite when $\Sigma$
is noncompact.

A \emph{singular hyperbolic metric representing $\mathrm{D}$} is a
conformal metric $\dd s^2$ of Gaussian curvature $-1$ on
$
\Sigma\setminus\supp\mathrm{D}
$
satisfying the following local conditions. For each
$p_i\in\supp\mathrm{D}$, choose a coordinate neighborhood $U_i$ and a
holomorphic coordinate
$z_i: U_i\longrightarrow z_i(U_i)\subset\C,\ z_i(p_i)=0.$
On $U_i\setminus\{p_i\}$, write
$
\dd s^2=e^{2u_i}|\dd z_i|^2.
$
If $\theta_i>0$, then
$$
u_i-(\theta_i-1)\log|z_i|
$$
extends continuously across $p_i$. In this case, $p_i$ is called a cone
singularity of angle $2\pi\theta_i$. If $\theta_i=0$, then, after shrinking $U_i$ so that
$0<|z_i|<1$ on $U_i\setminus\{p_i\}$,
$$
u_i+\log|z_i|+\log\bigl(-\log|z_i|\bigr)
$$
extends continuously across $p_i$. In this case, $p_i$ is called a cusp
singularity.

For a compact connected Riemann surface, the existence and uniqueness of
a singular hyperbolic metric are governed by the
Gauss--Bonnet condition. Let
$$
\mathrm{D}
=
\sum_{i=1}^{n}(\theta_i-1)p_i,
\qquad
\theta_i\geq0.
$$
A hyperbolic metric representing $\mathrm{D}$ can exist only if
$$
\chi(\Sigma)+\sum_{i=1}^{n}(\theta_i-1)<0,
$$
where $\chi(\Sigma)$ denotes the Euler characteristic of $\Sigma$. If
$\Sigma$ has genus $g_\Sigma$, then
$
\chi(\Sigma)=2-2g_\Sigma.
$

Using the theory of $S$-$K$ metrics and a Perron-type extremal
construction, Heins proved that this condition is also sufficient and
that the corresponding hyperbolic metric is unique
\cite[Chapter~II]{Heins1962}. For conical singularities, independent PDE
proofs were subsequently obtained by McOwen and Troyanov
\cite{McOwen1988,Troyanov1991}. Prescribed-curvature problems on open
surfaces were studied by Hulin--Troyanov
\cite{HulinTroyanov1992}, while further existence results for conformal
metrics with prescribed curvature and singularities were established by
McOwen \cite{McOwen1993}. These works primarily concern the existence
and uniqueness of the metric itself. In the present paper, we study
instead the algebraic size of the monodromy group of its developing map.

\subsection{Developing maps and monodromy}

Let
$
\Sigma^*
=
\Sigma\setminus\supp\mathrm{D}
$
be the regular part of the metric. A singular hyperbolic metric
$\dd s^2$ admits a holomorphic locally univalent developing map from the
universal cover of $\Sigma^*$ to the upper half-plane $\Hh$, and the
developing map is equivariant with respect to a representation
$$
\rho:\pi_1(\Sigma^*)\longrightarrow\PSL(2,\R).
$$
Changing the developing map conjugates the image of $\rho$. Consequently,
the conjugacy class of
$
\rho\bigl(\pi_1(\Sigma^*)\bigr)
$
is determined by the metric; see
\cite[Theorem~1.2]{FengShiSongXu2023}. Throughout this paper, we refer to
this conjugacy class as the \emph{monodromy group} of $\dd s^2$.

Equivalently, the developing map may be regarded as a multivalued locally
univalent holomorphic map on $\Sigma^*$ whose branches are related by
real M\"obius transformations. The local singularity type determines the
conjugacy class of the corresponding peripheral monodromy. The precise
developing-map formalism and the subgroup reduction used in the proof are
given in Section~\ref{sec:geometric-setup}.

The restrictions on the monodromy considered here depend on the
potential-theoretic properties of the underlying surface. We recall next
the notions needed to formulate this condition.

\subsection{Potential-theoretic type of the base surface}

We use the potential-theoretic classification of Riemann surfaces. A
Riemann surface is called \emph{elliptic} if it is compact; a noncompact
surface is called \emph{parabolic} if it carries no nonconstant negative
subharmonic function, and \emph{hyperbolic} otherwise. We use
\emph{non-hyperbolic} for elliptic and parabolic surfaces; see
\cite[p.~179]{FarkasKra} and
\cite[Chapter~IV, \S1.6, pp.~203--206]{AhlforsSario}.

For our proof, the essential property of a parabolic surface is the
existence of compactly supported cutoff functions converging locally to
one whose Dirichlet energies tend to zero. The precise form
needed here is proved in Lemma~\ref{lem:parabolic-cutoff}. These cutoffs
play the role of the constant function $1$ in the compact case.

This potential-theoretic viewpoint was applied to singular hyperbolic
metrics in \cite[Definition~1.3 and Theorem~1.6]{FengShiSongXu2023}.
Their results exclude several proper subgroup types and reduce the
remaining general case to affine monodromy.

\subsection{Proper subgroups and the remaining affine case} We recall the subgroup notation used in \cite[Proposition~1.4]{FengShiSongXu2023}. The affine subgroup is $$ L = \left\{ \begin{pmatrix} a&b\\ 0&a^{-1} \end{pmatrix} : a>0,\ b\in\R \right\}, $$ whose conjugates are precisely the stabilizers of points of $\partial\Hh$. For $c>0$, let $$ H_{1c} = \left\{ \begin{pmatrix} c^n&t\\ 0&c^{-n} \end{pmatrix} : n\in\Z,\ t\in\R \right\}, $$ and in particular $$ L_0 = H_{11} = \left\{ \begin{pmatrix} 1&t\\ 0&1 \end{pmatrix} : t\in\R \right\}. $$ The split torus is $$ H_2 = \left\{ \begin{pmatrix} a&0\\ 0&a^{-1} \end{pmatrix} : a>0 \right\}, $$ with normalizer $$ H_2' = H_2 \cup \left\{ \begin{pmatrix} 0&b\\ -b^{-1}&0 \end{pmatrix} : b>0 \right\}. $$ Finally, $$ H_3 = \left\{ \begin{pmatrix} \cos t&\sin t\\ -\sin t&\cos t \end{pmatrix} : t\in\R \right\} /\{\pm I_2\} $$ is a maximal compact subgroup of $\PSL(2,\R)$. As shown precisely in Proposition~\ref{prop:subgroup-reduction}, if a subgroup of $\PSL(2,\R)$ is not Zariski dense, then, after conjugation, it is contained in one of the three groups $L,\ H_2',\ H_3.$

Following \cite{FengShiSongXu2023}, a singular hyperbolic metric is called a $G$-metric if its monodromy group is contained in a conjugate of $G$. The authors proved that a non-hyperbolic Riemann surface carries no singular hyperbolic $H_2$-, $H_2'$-, $H_3$-, or $L_0$-metric \cite[Theorem~1.6]{FengShiSongXu2023}, and excluded $L$-metrics for several particular non-hyperbolic surfaces \cite[Corollary~5.4 and Theorem~5.6]{FengShiSongXu2023}. They formulated the following conjecture. \begin{conjecture}[Feng--Shi--Song--Xu] \label{conj:zariski-intro} The monodromy group of a singular hyperbolic metric on a non-hyperbolic Riemann surface is Zariski dense in $\PSL(2,\R)$. \end{conjecture} In view of the preceding subgroup exclusions, the remaining general case is the affine one. Our principal result is therefore the following. \begin{theorem} \label{thm:no-affine-intro} Let $\Sigma$ be a compact or parabolic Riemann surface, and let $\dd s^2$ be a singular hyperbolic metric on $\Sigma$. Then the monodromy group of $\dd s^2$ is not contained in any conjugate of $L$. \end{theorem}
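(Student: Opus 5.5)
The plan is to find a real potential on $\Sigma^*$ whose Laplacian is the negative of the area density, and then to test that identity against the cutoffs of Lemma~\ref{lem:parabolic-cutoff}. Suppose, for contradiction, that after conjugation the monodromy lies in $L$, so every element acts on $\Hh$ by $z\mapsto a^2z+ab$ and fixes $\infty$. Let $f$ be a developing map, so that $\dd s^2=|f'|^2|\dd z|^2/(\operatorname{Im} f)^2$ locally. Put $h=\operatorname{Im} f>0$ and $v=\log h$.

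\emph{Step 1: the potential.} Under a deck transformation $\gamma$ we have $h\circ\gamma=a_\gamma^2 h$, so $v$ changes only by the additive constant $2\log a_\gamma$. Hence $\alpha=\dd v$ and $*\dd v$ are well-defined real $1$-forms on $\Sigma^*$. Since $h$ is harmonic, $\Delta v=-|\nabla h|^2/h^2$. Because $|\nabla h|=|f'|$, this gives, in any local coordinate,
$$
\Delta v=-|\nabla v|^2=-e^{2u},
$$
i.e.\ $\dd(*\dd v)=-\dd A$, where $\dd A$ is the hyperbolic area form. Note that $|\nabla v|^2\,\dd x\,\dd y$ is also exactly $\dd A$; this is a conformally invariant identity.

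\emph{Step 2: the energy estimate.} Let $\chi$ be a compactly supported Lipschitz cutoff; in the compact case take $\chi\equiv1$. Multiply by $\chi^2$ and integrate by parts; Cauchy--Schwarz then gives
$$
\int\chi^2\,\dd A=\int 2\chi\,\nabla\chi\cdot\nabla v
\le 2\Bigl(\int|\nabla\chi|^2\Bigr)^{1/2}\Bigl(\int\chi^2\,\dd A\Bigr)^{1/2}.
$$
Therefore $\int\chi^2\,\dd A\le 4\int|\nabla\chi|^2$. In the compact case the right side is $0$, which contradicts positivity of the area. In the parabolic case, take the cutoffs $\chi_n$ of Lemma~\ref{lem:parabolic-cutoff}, which converge locally to $1$ with Dirichlet energy tending to $0$. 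Fatou's lemma then forces the total area to vanish, again a contradiction. The Dirichlet integral is conformally invariant, so it does not matter that the cutoffs come from the conformal structure of $\Sigma$ rather than the metric.

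\emph{Step 3: the singular points.} This is the main obstacle: we must justify the integration by parts across $\supp\mathrm D$. The plan is to excise small discs $|z_i|<\varepsilon$ around the singular points in $\supp\chi$ (finitely many) and show that the boundary terms $\int_{|z_i|=\varepsilon}\chi^2 *\dd v$ tend to $0$.
\begin{itemize}
\item \emph{Cone points.} The peripheral monodromy is elliptic of angle $2\pi\theta_i$ unless $\theta_i\in\mathbb Z$. The group $L$ contains no nontrivial elliptic elements, so $\theta_i$ must be an integer $\ge2$. In that case $f$ extends holomorphically, with a branch point, to a punctured neighbourhood with $h>0$ at $p_i$. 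So $v$ is smooth across $p_i$ and the boundary term vanishes as $\varepsilon\to0$.
\item \emph{Cusps.} The peripheral monodromy is parabolic and lies in $L$, so it is a translation $z\mapsto z+b$ with $b\ne0$. Using the normal form of \cite{FengShiXu2020}, $f=\frac{b}{2\pi i}\log z_i+g(z_i)$ with $g$ holomorphic, after possibly replacing $b$ by $-b$. Hence $h=c\log(1/|z_i|)+O(1)$ with $c>0$, and $|\dd v|\sim 1/(|z_i|\log(1/|z_i|))$. The boundary integral is then $O\bigl(1/\log(1/\varepsilon)\bigr)\to0$.
\end{itemize}
Local integrability of $\dd A$ near the singular points (finite area near cone points and cusps) makes the left side of Step 2 finite. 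With these limits the estimate of Step 2 holds verbatim, and the theorem follows.
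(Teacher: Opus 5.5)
Your proposal is correct and follows essentially the same route as the paper: the closed form $\dd\log\Im f$ with $\Div_g(\alpha^{\sharp_g})=-|\alpha|_g^2$, excision of discs at the (necessarily integral) cone points and at the cusps with vanishing boundary flux, the $\phi^2$-weighted Cauchy--Schwarz bound $\int\phi^2|\alpha|_g^2\le 4\int|\dd\phi|_g^2$, and then $\phi\equiv1$ in the compact case or the parabolic cutoffs in the noncompact case. The only differences are cosmetic. You observe that $|\alpha|_g^2\,\dd A_g$ is exactly the hyperbolic area form, so your contradiction is positivity of area rather than the paper's route from $\alpha\equiv0$ to a constant developing map. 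You also derive integrality of the cone angles directly from the absence of nontrivial elliptic elements in $L$, where the paper cites \cite[Lemma~5.3]{FengShiSongXu2023}.
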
 Together with the results of \cite{FengShiSongXu2023}, this yields the main theorem. \begin{theorem} \label{thm:main-intro} The monodromy group of every singular hyperbolic metric on a non-hyperbolic Riemann surface is Zariski dense in $\PSL(2,\R)$. \end{theorem}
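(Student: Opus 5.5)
The proof of Theorem~\ref{thm:main-intro} will be a short reduction: combine the subgroup classification with the nonexistence results already available, and let Theorem~\ref{thm:no-affine-intro} handle the one remaining case. Let $\Sigma$ be a non-hyperbolic Riemann surface, so by definition $\Sigma$ is compact or parabolic. Let $\dd s^2$ be a singular hyperbolic metric on $\Sigma$ representing some divisor $\mathrm{D}$, with monodromy group $\Gamma=\rho\bigl(\pi_1(\Sigma^*)\bigr)\subset\PSL(2,\R)$. This group is well defined up to conjugacy, and Zariski density is invariant under conjugation, so the conclusion does not depend on the choice of developing map. I argue by contradiction and assume that $\Gamma$ is not Zariski dense.

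By Proposition~\ref{prop:subgroup-reduction}, after conjugation $\Gamma$ lies in one of $L$, $H_2'$, or $H_3$. This means $\dd s^2$ is an $L$-metric, an $H_2'$-metric, or an $H_3$-metric. The three cases are closed off as follows.

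\begin{itemize}[leftmargin=*]
\item \emph{$H_2'$ or $H_3$.} These are excluded on any non-hyperbolic surface by \cite[Theorem~1.6]{FengShiSongXu2023}. That result also excludes $H_2$-metrics and $L_0$-metrics, but those cases are already covered: $H_2\subset H_2'$, and $L_0\subset L$.
\item \emph{$L$.} This case is exactly what Theorem~\ref{thm:no-affine-intro} rules out, and that theorem is stated for every compact or parabolic $\Sigma$.
\end{itemize}

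Every alternative therefore leads to a contradiction, so $\Gamma$ is Zariski dense.

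The main point to verify is that the hypotheses line up. The cited exclusions of \cite{FengShiSongXu2023} must be stated for exactly the same class of metrics as here: a closed discrete singular set that may be infinite, with both cone and cusp points allowed. They must also use the same notion of non-hyperbolic surface, namely the potential-theoretic one. I will also check that Proposition~\ref{prop:subgroup-reduction} yields containment in a conjugate of one of the three groups for an arbitrary, possibly non-closed, subgroup $\Gamma$. This should follow from taking the Zariski closure of $\Gamma$, which is a proper algebraic subgroup of $\PSL(2,\R)$, and classifying such subgroups. All of the genuine difficulty in this argument is in Theorem~\ref{thm:no-affine-intro}, which I take as given.
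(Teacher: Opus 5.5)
Your proposal is correct and is essentially the paper's own argument: Theorem~\ref{thm:main-intro} is obtained by using Proposition~\ref{prop:subgroup-reduction} to reduce to containment in a conjugate of $L$, $H_2'$, or $H_3$, then excluding $H_2'$ and $H_3$ via \cite[Theorem~1.6]{FengShiSongXu2023} and the affine case via Theorem~\ref{thm:no-affine}. The paper does not carry out your hypothesis-matching checks; it relies on citation and on its working definition of Zariski density as non-containment in a positive-dimensional proper Lie subgroup, which makes the reduction immediate for an arbitrary, possibly non-closed, $\Gamma$.
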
 The subgroup reduction used above is proved in Proposition~\ref{prop:subgroup-reduction}.

\subsection{Idea of the proof and organization of the paper}

We briefly explain the main idea behind
Theorem~\ref{thm:no-affine-intro}. Suppose that the monodromy is
contained in the affine subgroup $L$, and write the developing map on
the universal cover as
$\widetilde f=\widetilde u+{\rm i}\widetilde v,\ \widetilde v>0.$
Affine monodromy multiplies $\widetilde v$ by a positive constant.
Therefore
$\dd\log\widetilde v$
is invariant under deck transformations and descends to a globally
defined real one-form $\alpha$ on $\Sigma^*$. For a local branch
$f=u+{\rm i}v$ of the developing map,
$$
\alpha=\dd\log v.
$$
Since $v$ is positive and harmonic, $\alpha$ satisfies
$$
\Div_g(\alpha^{\sharp_g})=-|\alpha|_g^2
$$
for any smooth metric $g$ compatible with the complex structure.
The main analytic point is to justify the global use of this identity
in the presence of cone and cusp singularities. Local estimates near
the singular set provide the required integrability and show that the
boundary terms arising from small circles around the singularities
vanish.

Applying the divergence theorem then gives, for every
$\phi\in C_c^\infty(\Sigma)$,
$$
\int_{\Sigma^*}\phi^2|\alpha|_g^2\,\dd A_g
\leq
4\int_{\Sigma}|\dd\phi|_g^2\,\dd A_g.
$$
If $\Sigma$ is compact, we take $\phi\equiv1$. If $\Sigma$ is
noncompact and parabolic, we use compactly supported cutoff functions
whose Dirichlet energies tend to zero. In both cases the inequality
forces
$$
\alpha\equiv0.
$$
Hence the imaginary part of the developing map is locally constant;
holomorphicity then forces the developing map itself to be locally
constant, contradicting its local univalence. This excludes affine
monodromy.

The paper is organized as follows.
Section~\ref{sec:geometric-setup} recalls developing maps and establishes the subgroup reduction to the affine case.
Section~\ref{sec:differential-identity} constructs $\alpha$ and proves
the divergence identity.
Section~\ref{sec:local-estimates} establishes the local estimates near
cone and cusp singularities.
Section~\ref{sec:energy-inequality} proves the compactly supported
energy inequality.
Finally, Section~\ref{sec:parabolic-cutoffs} applies compactness or
parabolic cutoffs to exclude affine monodromy and completes the proof
of the main theorem.

\section{Developing maps and reduction to the affine case}
\label{sec:geometric-setup}

In this section we recall developing maps and monodromy, and reduce the
main theorem to the case of affine monodromy.

\subsection{Developing maps and monodromy}

Let $\supp\mathrm{D}=\{p_i\}_{i\in I}\subset\Sigma$
be a closed discrete subset and set
$\Sigma^*=\Sigma\setminus \supp\mathrm{D}.$
Since $\supp\mathrm{D}$ is closed and discrete, it is locally finite.

Let $\dd s^2$ be a conformal metric of Gaussian curvature $-1$ on
$\Sigma^*$, with a cone or cusp singularity at each point of $\supp\mathrm{D}$.
Write
$\Hh=\{w\in\C:\Im w>0\},\ \dd s_{\Hh}^2=\frac{|\dd w|^2}{(\Im w)^2}.$
If $\pi:\widetilde{\Sigma^*}\longrightarrow\Sigma^*$
is the universal covering map, then $\dd s^2$ admits a holomorphic
locally univalent developing map
$$
\widetilde f:\widetilde{\Sigma^*}\longrightarrow\Hh
$$
such that $\pi^*\dd s^2=\widetilde f^*\dd s_{\Hh}^2.$
There is a representation
$$
\rho:\pi_1(\Sigma^*)\longrightarrow\PSL(2,\R)
$$
for which $\widetilde f$ is $\rho$-equivariant:
$\widetilde f(\gamma\cdot\widetilde x)=\rho(\gamma)\bigl(\widetilde f(\widetilde x)\bigr).$

The developing map is unique up to postcomposition by an element of
$\PSL(2,\R)$. Replacing $\widetilde f$ by
$A\circ\widetilde f$ replaces $\rho$ by
$A\rho A^{-1}$. Hence the conjugacy class $\left[\rho\bigl(\pi_1(\Sigma^*)\bigr)\right]$
is independent of the choice of developing map; see
\cite[Theorem~1.2]{FengShiSongXu2023}. We call this conjugacy class the
\emph{monodromy group} of $\dd s^2$.

We shall also use the associated multivalued developing map on the base
surface. If $U\subset\Sigma^*$ is simply connected and evenly covered
by $\pi$, a local section
$s_U:U\to\widetilde{\Sigma^*}$ gives a branch
$$
f_U=\widetilde f\circ s_U:U\longrightarrow\Hh.
$$
Two such branches differ by postcomposition with a monodromy
transformation. Thus $\widetilde f$ is the single-valued developing map
on the universal cover, whereas
$$
f:\Sigma^*\longrightarrow\Hh
$$
will denote the corresponding multivalued developing map on the base
surface.

If $G\subset\PSL(2,\R)$, we say that the monodromy group is
\emph{contained in a conjugate of $G$} if the image of a monodromy
representation is contained in some conjugate of $G$. This condition is
independent of the chosen developing map.

\subsection{Subgroup reduction}
We now justify the subgroup reduction stated in the introduction. We use the notation $L$, $H_{1c}$, $H_2$, $H_2'$, and $H_3$
introduced there. Following \cite{FengShiSongXu2023}, we use
Zariski density in $\PSL(2,\R)$ in the equivalent sense that a subgroup is Zariski dense if it is not contained in any positive-dimensional proper Lie subgroup of $\PSL(2,\R)$. We also use the classification of such Lie subgroups given in \cite[Proposition~1.4]{FengShiSongXu2023}.

\begin{proposition}[Subgroup reduction]
\label{prop:subgroup-reduction}
Let $\Gamma\subset\PSL(2,\R)$. If $\Gamma$ is not Zariski dense in
$\PSL(2,\R)$, then, up to conjugacy,
$$
\Gamma\subset L,\qquad
\Gamma\subset H_2',\qquad\text{or}\qquad
\Gamma\subset H_3.
$$
\end{proposition}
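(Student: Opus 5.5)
Following the paper's convention, a subgroup $\Gamma$ is Zariski dense exactly when it lies in no positive-dimensional proper Lie subgroup of $\PSL(2,\R)$. So if $\Gamma$ is not Zariski dense, there is a proper Lie subgroup $G$ with $\dim G\ge 1$ and $\Gamma\subset G$. The plan is to take the classification of such subgroups up to conjugacy from \cite[Proposition~1.4]{FengShiSongXu2023} and check that each one lies in a conjugate of $L$, $H_2'$, or $H_3$.

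The classification can be organized through the Lie algebra. The identity component $G^0$ has Lie algebra $\mathfrak g$, a proper nonzero subalgebra of $\mathfrak{sl}(2,\R)$, so $\dim\mathfrak g\in\{1,2\}$.
\begin{itemize}[leftmargin=*]
\item If $\dim\mathfrak g=2$, then up to conjugacy $\mathfrak g$ is the Borel subalgebra, and $G^0=L$.
\item If $\dim\mathfrak g=1$, then $\mathfrak g$ is spanned by an elliptic, hyperbolic, or parabolic element. Up to conjugacy this gives $G^0=H_3$, $G^0=H_2$, or $G^0=L_0$.
\end{itemize}
Since $G^0$ is normal in $G$, the group $G$ lies in the normalizer $N(G^0)$. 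The key step is to compute these normalizers:
\begin{itemize}[leftmargin=*]
\item $N(L)=L$, because $L$ is the stabilizer of $\infty\in\partial\Hh$ and is self-normalizing.
\item $N(L_0)=L$, because an element normalizing $L_0$ must fix the unique fixed point $\infty$ of $L_0$ on $\partial\Hh$.
\item $N(H_2)=H_2'$, because a normalizing element must preserve the fixed-point set $\{0,\infty\}$ of $H_2$, either fixing both points or swapping them.
\item $N(H_3)=H_3$, because a normalizing element must fix the unique fixed point ${\rm i}\in\Hh$ of $H_3$.
\end{itemize}
Consequently $\Gamma\subset G\subset N(G^0)$ is contained, up to conjugacy, in $L$, $H_2'$, or $H_3$. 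The groups $H_{1c}$ appear in the cited classification as subgroups of $L$, so they need no separate treatment.

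The step needing the most care is the normalizer computation, along with the earlier claim that $\Gamma\subset G$ forces $G\subset N(G^0)$ (which holds because $G^0$ is normal in $G$). I would handle the normalizers through fixed points of the Möbius action on $\overline{\Hh}$. Conjugation carries the fixed-point set of $G^0$ to that of $gG^0g^{-1}$, so any element normalizing $G^0$ must preserve that fixed-point set. This reduces each normalizer to the elementary description above.

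If \cite[Proposition~1.4]{FengShiSongXu2023} already lists all positive-dimensional proper Lie subgroups up to conjugacy as subgroups of $L$, $H_{1c}$, $H_2$, $H_2'$, $H_3$, then the proof reduces to two inclusions, $H_{1c}\subset L$ and $H_2\subset H_2'$. Both are immediate from the matrix forms given in the introduction.
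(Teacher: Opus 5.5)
Your argument is correct. Your final paragraph is exactly the paper's proof: the paper invokes \cite[Proposition~1.4]{FengShiSongXu2023} to conclude that, after conjugation, $G$ is one of $L$, $H_{1c}$, $H_2$, $H_2'$, $H_3$, and then finishes with the inclusions $H_{1c}\subset L$ and $H_2\subset H_2'$.

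The main body of your proposal goes further and reproves the part of that classification that is actually needed. You classify only the identity component $G^0$ through its Lie algebra: either a Borel subalgebra, or a line spanned by an elliptic, hyperbolic, or parabolic element. You then trap $G$ inside $N(G^0)$, using $N(L)=N(L_0)=L$, $N(H_2)=H_2'$ and $N(H_3)=H_3$, each read off from the action on fixed points in $\Hh\cup\partial\Hh$. All of these steps are sound.

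The two routes buy different things.
\begin{itemize}
\item The paper's route is shorter, because it delegates everything to the earlier classification.
\item Your route is self-contained and never needs the groups $H_{1c}$.
\item Your route proves containment rather than equality, so it does not matter whether ``Lie subgroup'' means a closed subgroup or merely an immersed one. For example, take the upper-triangular elements of $L$ with diagonal entries $a,a^{-1}$, $a\in\mathbb{Q}_{>0}$, topologized so that its components are the cosets of $L_0$. This is a non-closed Lie subgroup with identity component $L_0$, and it is not conjugate to any group on the cited list. Your normalizer argument still places it in $L$ at once.
\end{itemize}
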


\begin{proof}
Since $\Gamma$ is not Zariski dense, it is contained in a
positive-dimensional proper Lie subgroup $G$ of $\PSL(2,\R)$.
By \cite[Proposition~1.4]{FengShiSongXu2023}, after conjugation,
$G$ is one of
$$
L,\qquad H_{1c},\qquad H_2,\qquad H_2',\qquad H_3.
$$
Since $H_{1c}\subset L$ and $H_2\subset H_2'$, the five
possibilities reduce to
$$
\Gamma\subset L,\qquad
\Gamma\subset H_2',\qquad\text{or}\qquad
\Gamma\subset H_3.
$$
\end{proof}

By the nonexistence results recalled in the introduction, the only remaining general case is affine monodromy. Hence, after postcomposing the developing map by a suitable element of $\PSL(2,\R)$, we may assume throughout the rest of the proof that 
$$ \rho\bigl(\pi_1(\Sigma^*)\bigr)\subset L. $$ The following sections exclude this possibility when the underlying Riemann surface is compact or parabolic.

\section{The global differential associated with affine monodromy}
\label{sec:differential-identity}

Throughout this section, we assume that the image of the monodromy
representation is contained in the affine subgroup $L$. Thus, for every
deck transformation $\gamma$, there exist constants
$\lambda_\gamma>0$ and $\mu_\gamma\in\R$ such that
$\widetilde f\circ\gamma=\lambda_\gamma\widetilde f+\mu_\gamma.$

\subsection{Construction of the global one-form}

Write the developing map as
$\widetilde f=\widetilde u+{\rm i}\widetilde v$, where
$\widetilde v>0$. Taking imaginary parts in the affine equivariance
relation gives $\widetilde v\circ\gamma=\lambda_\gamma\widetilde v.$
Consequently,
$$
\log(\widetilde v\circ\gamma)
=
\log\widetilde v+\log\lambda_\gamma,
$$
and therefore $\gamma^*(\dd\log\widetilde v)=\dd\log\widetilde v.$
Thus $\dd\log\widetilde v$ is invariant under all deck transformations
and descends to a unique smooth real one-form $\alpha$ on $\Sigma^*$,
characterized by
$$
\pi^*\alpha
=
\dd\log\widetilde v.
$$
In particular, $\alpha$ is closed.

This construction has a simple description in terms of local branches.
Let $U\subset\Sigma^*$ be a simply connected evenly covered open set,
let $s_U:U\to\widetilde{\Sigma^*}$ be a local section of $\pi$, and set
$$
f_U
=
\widetilde f\circ s_U
=
u_U+{\rm i}v_U.
$$
Pulling back the defining identity for $\alpha$ by $s_U$, we obtain
$\alpha|_U=\dd\log v_U.$

If a different local section is chosen, then the corresponding imaginary
part is a positive constant multiple of $v_U$. Hence its logarithm differs
from $\log v_U$ by a constant, and the local expression for $\alpha$ is
unchanged.

Fix a smooth Riemannian metric $g$ compatible with the complex structure
of $\Sigma$. We write $\dd A_g$ and $\dd s_g$ for the corresponding area
and arclength elements, and use the convention
$\Delta_g=\Div_g\nabla_g$.

We shall repeatedly use the conformal invariance of the relevant
two-dimensional quantities. If $\widehat g=e^{2\sigma}g$, then, for every
real one-form $\beta$,
$|\beta|_{\widehat g}^2\,\dd A_{\widehat g}=|\beta|_g^2\,\dd A_g.$
Along a smooth curve, with compatibly oriented unit normals,
$\beta(\nu_{\widehat g})\,\dd s_{\widehat g}=\beta(\nu_g)\,\dd s_g.$
Moreover, for every smooth function $h$, $\Delta_{\widehat g}h=e^{-2\sigma}\Delta_gh,\
|\dd h|_{\widehat g}^2=e^{-2\sigma}|\dd h|_g^2.$

\subsection{The nonlinear divergence identity}

The global one-form $\alpha$ satisfies a nonlinear equation that will be
the main analytic input in the proof.

\begin{lemma}[Nonlinear divergence identity]
\label{lem:global-divergence}
For every smooth Riemannian metric $g$ compatible with the complex
structure of $\Sigma$, the one-form $\alpha$ satisfies
$$
\Div_g(\alpha^{\sharp_g})
=
-|\alpha|_g^2
$$
on $\Sigma^*$, where $\alpha^{\sharp_g}$ is the vector field metrically
dual to $\alpha$.
\end{lemma}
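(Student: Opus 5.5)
The identity is local and pointwise, so I will verify it on a small simply connected, evenly covered open set $U\subset\Sigma^*$. There $\alpha|_U=\dd\log v_U$ with $v_U=\Im f_U>0$. Since $f_U$ is holomorphic, $v_U$ is harmonic with respect to the complex structure. Thus $\Delta_{g}v_U=0$ for every metric $g$ compatible with the complex structure: in a local holomorphic coordinate $z=x+{\rm i}y$ we have $g=e^{2\sigma}|\dd z|^2$, and by the conformal transformation rule recalled above, $\Delta_g=e^{-2\sigma}(\partial_x^2+\partial_y^2)$.

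The computation will then be the standard one for the logarithm of a positive harmonic function. Since $\alpha^{\sharp_g}=\nabla_g\log v_U$, we have $\Div_g(\alpha^{\sharp_g})=\Delta_g\log v_U$. The chain rule gives
$$
\Delta_g\log v_U=\frac{\Delta_g v_U}{v_U}-\frac{|\dd v_U|_g^2}{v_U^2}=-\bigl|\dd\log v_U\bigr|_g^2=-|\alpha|_g^2 .
$$
Alternatively, I can do the computation in the flat coordinate metric $|\dd z|^2$ and transfer it to $g$. Both sides scale by the same factor $e^{-2\sigma}$: we have $\Div_g(\nabla_g h)=e^{-2\sigma}\Delta_0 h$ and $|\dd h|_g^2=e^{-2\sigma}|\dd h|_0^2$, so the identity is independent of the choice of compatible $g$.

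Finally, I will check well-definedness on overlaps. Changing the local section replaces $v_U$ by $\lambda v_U$ with $\lambda>0$, as already noted in the construction of $\alpha$, so $\log v_U$ changes only by an additive constant. Hence both sides of the identity agree on overlaps, and the identity holds on all of $\Sigma^*$.

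There is no real obstacle here. The only point requiring care is the sign convention $\Delta_g=\Div_g\nabla_g$ and the observation that harmonicity, unlike the Laplacian itself, is conformally invariant in dimension two. The genuine difficulties, namely integrability and boundary terms near the cone and cusp points, are deferred to the later sections and do not enter this pointwise lemma.
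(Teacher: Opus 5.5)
Your proposal is correct and follows essentially the same route as the paper. Both work on a simply connected neighborhood where $\alpha=\dd\log v_U$ with $v_U$ positive and harmonic, compute $\Delta\log v_U=-|\dd\log v_U|^2$, and use the two-dimensional conformal scaling of $\Delta_g$ and $|\cdot|_g^2$ to pass between the Euclidean coordinate metric and $g$. The paper computes in the Euclidean metric and then transfers, which is your stated alternative; your main line computes directly with $\Delta_g$ using the conformal invariance of harmonicity, and this is the same argument.
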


\begin{proof}
Let $U\subset\Sigma^*$ be a simply connected coordinate neighborhood and
write a local branch of the developing map as
$f_U=u_U+{\rm i}v_U$, where $v_U>0$. Set $h_U=\log v_U$. Then
$\dd h_U=\alpha|_U$.

Since $f_U$ is holomorphic, $v_U$ is harmonic. Hence, in a holomorphic
coordinate,
$$\Delta_0 h_U=\Delta_0\log v_U
=-|\dd v_U|_0^2/v_U^2=-|\dd h_U|_0^2.$$
If $g=e^{2\sigma}|\dd z|^2$, then in real dimension two, we have
$\Delta_g h_U=e^{-2\sigma}\Delta_0h_U$ and
$|\dd h_U|_g^2=e^{-2\sigma}|\dd h_U|_0^2$. Therefore
$\Delta_g h_U=-|\dd h_U|_g^2$.

Since $\alpha|_U=\dd h_U$, we obtain
$\Div_g(\alpha^{\sharp_g})=\Delta_g h_U=-|\alpha|_g^2$.
As $U$ is arbitrary, the identity holds on all of $\Sigma^*$.
\end{proof}

\section{Local behavior at the singular set}
\label{sec:local-estimates}

We now study the behavior of the global one-form $\alpha$ near the
singular set $\supp\mathrm{D}$. Two properties will be needed in the global
integration argument: local square-integrability of $\alpha$ across the
singular points and the vanishing of the boundary flux along shrinking
coordinate circles.

In real dimension two, both the energy measure $|\alpha|_g^2\,\dd A_g$
and the flux measure $\alpha(\nu_g)\,\dd s_g$
are invariant under conformal changes of the background metric.
Consequently, the required local estimates may be carried out with
respect to the Euclidean metric in a holomorphic coordinate.

Assume throughout that the monodromy is contained in the affine subgroup
$L$. By \cite[Lemma~5.3 and its proof]{FengShiSongXu2023}, every
singularity is either a cusp or a cone singularity of angle
$2\pi m$, $m\in\Z_{>1}$, and the corresponding local forms of the
developing map will be used below.

\subsection{Cusp singularities}

Let $p\in \supp\mathrm{D}$ be a cusp singularity. By
\cite[Lemma~5.3 and its proof]{FengShiSongXu2023}, there exist a
holomorphic coordinate $z$ centered at $p$, a radius $0<R_p<1$, and
constants $\lambda>0$ and $s\in\R$ such that, on every simply connected domain $W\subset\{0<|z|<R_p\},$
a local branch of the developing map is
$$
f_{p,W}(z)=u_{p}(z)+{\rm i}v_{p}(z)=
-{\rm i}\lambda\operatorname{Log}_W z+s.
$$
Writing $\operatorname{Log}_W z=\log r+{\rm i}\vartheta,\ r=|z|,$ we obtain
$f_{p,W}(z)=\lambda\vartheta+s-{\rm i}\lambda\log r.$
Hence $\Im f_{p,W}(z)=-\lambda\log r.$ Two branches of the logarithm differ by $2\pi {\rm i}k$, $k\in\Z$, so the
corresponding branches of the developing map differ only by the real
constant $2\pi\lambda k$. Their imaginary parts therefore agree.
Consequently,
$v_p(z):=-\lambda\log|z|$
defines a single-valued positive function on
$\{0<|z|<R_p\}$. Thus
$$
\alpha=\dd\log v_p=\frac{\dd r}{r\log r}.
$$

The estimates needed later are summarized in the following proposition.

\begin{proposition}[Cusp estimates]
\label{prop:cusp}
For every $0<R\leq R_p$, where $0<R_p<1$
$$
\int_{0<|z|<R}
|\alpha|_0^2\,\dd x\,\dd y
=
\frac{2\pi}{|\log R|}.
$$
Moreover, let $\phi\in C^\infty(\{|z|<R_p\})$ be bounded. For
$0<\varepsilon<R_p$, set
$A_{\varepsilon,R_p}=\{z\in\C:\varepsilon<|z|<R_p\},$
and let $\nu_\varepsilon$ denote the outward Euclidean unit normal of
$A_{\varepsilon,R_p}$ along its inner boundary
$\{|z|=\varepsilon\}$. Then
$$
\left|
\int_{|z|=\varepsilon}
\phi^2\alpha(\nu_\varepsilon)\,\dd s_0
\right|
\leq
\frac{
2\pi\|\phi\|_{L^\infty(|z|<R_p)}^2
}{
|\log\varepsilon|
}
\longrightarrow0
$$
as $\varepsilon\to0$.
\end{proposition}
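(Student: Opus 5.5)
The plan is a direct computation in polar coordinates $z=re^{{\rm i}\vartheta}$. Both estimates follow from the explicit formula $\alpha=\dd r/(r\log r)$ derived just above the proposition, so no earlier results are needed beyond that local normal form.

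\emph{Step 1: pointwise norm.} Since $|\dd r|_0=1$ for the Euclidean metric,
$$
|\alpha|_0^2=\frac{1}{r^2(\log r)^2}.
$$

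\emph{Step 2: the energy identity.} With $\dd x\,\dd y=r\,\dd r\,\dd\vartheta$, the integral over the punctured disc $\{0<|z|<R\}$ becomes
$$
2\pi\int_0^R\frac{\dd r}{r(\log r)^2}.
$$
Substitute $t=\log r$. This gives
$$
2\pi\int_{-\infty}^{\log R}t^{-2}\,\dd t=2\pi\Bigl[-\tfrac1t\Bigr]_{-\infty}^{\log R}=-\frac{2\pi}{\log R}=\frac{2\pi}{|\log R|},
$$
where the last equality uses $R<1$, so $\log R<0$. The integral converges at $r=0$ precisely because $(\log r)^{-2}$ is integrable against $\dd t$ near $t=-\infty$. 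This also shows $\alpha\in L^2$ near the cusp.

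\emph{Step 3: the flux.} On the inner boundary circle $\{|z|=\varepsilon\}$ of $A_{\varepsilon,R_p}$, the outward normal points toward the origin, so $\nu_\varepsilon=-\partial_r$. Hence
$$
\alpha(\nu_\varepsilon)=-\frac{1}{\varepsilon\log\varepsilon}=\frac{1}{\varepsilon|\log\varepsilon|},
$$
and $\dd s_0=\varepsilon\,\dd\vartheta$. The boundary integral is therefore
$$
\frac{1}{|\log\varepsilon|}\int_0^{2\pi}\phi^2(\varepsilon e^{{\rm i}\vartheta})\,\dd\vartheta.
$$
Its absolute value is bounded by $2\pi\|\phi\|_{L^\infty}^2/|\log\varepsilon|$, and this tends to $0$ as $\varepsilon\to0$.

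The only delicate point is the sign and orientation bookkeeping: choosing the outward normal of the annulus correctly on the inner circle, and using $\log r<0$ for $r<1$. Neither sign affects the absolute-value bound. The energy identity, however, requires the sign of $\log R$ to come out right.
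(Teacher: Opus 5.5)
Your proof is correct and follows essentially the same route as the paper: you compute $|\alpha|_0^2=1/(r^2(\log r)^2)$ in polar coordinates, integrate radially, and evaluate the flux using $\nu_\varepsilon=-\partial_r$ and $\dd s_0=\varepsilon\,\dd\vartheta$. Your only addition is to write out the substitution $t=\log r$ explicitly.
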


\begin{proof}
With respect to the Euclidean metric in polar coordinates,
$\dd s_0^2=\dd r^2+r^2\dd\vartheta^2,$
we have $|\dd r|_0=1$. Since
$\alpha=\frac{\dd r}{r\log r},$
it follows that
$|\alpha|_0^2=\frac{1}{r^2(\log r)^2}.$
Using
$\dd x\,\dd y=r\,\dd r\,\dd\vartheta,$
we obtain
$$
\begin{aligned}
\int_{0<|z|<R}
|\alpha|_0^2\,\dd x\,\dd y
&=
\int_0^{2\pi}\int_0^R
\frac{1}{r^2(\log r)^2}
\,r\,\dd r\,\dd\vartheta\\
&=
2\pi\int_0^R
\frac{\dd r}{r(\log r)^2}
=
\frac{2\pi}{|\log R|}.
\end{aligned}
$$

Along the inner boundary $\{|z|=\varepsilon\}$ of
$A_{\varepsilon,R_p}$, the outward Euclidean unit normal is
$\nu_\varepsilon=-\partial_r.$
Since $\log\varepsilon<0$,
$$
\alpha(\nu_\varepsilon)
=
-\frac{1}{\varepsilon\log\varepsilon}
=
\frac{1}{\varepsilon|\log\varepsilon|}.
$$
Also,
$\dd s_0=\varepsilon\,\dd\vartheta.$
Therefore
$$
\begin{aligned}
\left|
\int_{|z|=\varepsilon}
\phi^2\alpha(\nu_\varepsilon)\,\dd s_0
\right|
&\leq
\|\phi\|_{L^\infty(|z|<R_p)}^2
\int_0^{2\pi}
\frac{1}{\varepsilon|\log\varepsilon|}
\,\varepsilon\,\dd\vartheta\\
&=
\frac{
2\pi\|\phi\|_{L^\infty(|z|<R_p)}^2
}{
|\log\varepsilon|
},
\end{aligned}
$$
which tends to zero as $\varepsilon\to0$.
\end{proof}

\subsection{Integral cone singularities}

We first record the precise local form of the developing map near an integral cone point. The following lemma shows, in particular, that the peripheral monodromy is trivial and hence that the developing map admits a single-valued holomorphic branch on a punctured coordinate disc. This form will be used in the cone-point estimates below.

\begin{lemma}[Local form at an integral cone point]
\label{lem:cone-extension}
Let $p\in \supp\mathrm{D}$ be a cone singularity of angle $2\pi m$, where
$m\geq2$. There exist a holomorphic coordinate $z$ centered at $p$, a
radius $R_0>0$, and a single-valued local branch $f_p:\{0<|z|<R_0\}\longrightarrow\Hh$
of the developing map such that
$$
f_p(z)
=
\frac{az^m+b}{cz^m+d},
\qquad
ad-bc\neq0.
$$
After decreasing $R_0$ if necessary, $f_p$ extends holomorphically
across $z=0$, its extended value satisfies
$f_p(0)=\frac{b}{d}\in\Hh,$
and
$f_p'(z)=z^{m-1}g_p(z),$
where $g_p$ is holomorphic near $0$ and $g_p(0)\neq0$.
\end{lemma}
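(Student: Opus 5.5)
We combine the local normal form of a cone singularity with the affine monodromy to pin down the developing map. The plan is to first invoke \cite[Lemma~5.3 and its proof]{FengShiSongXu2023}, as was done for cusps. Alternatively we use the standard local classification of cone singularities of hyperbolic metrics \cite{FengShiXu2020}. Either source says the following. Near a cone point of angle $2\pi\theta$ there is a holomorphic coordinate $z$ centered at $p$ in which a local branch of the developing map has the form $f=A\circ(z^\theta)$ for some M\"obius transformation $A\in\PSL(2,\C)$ sending the unit disc (or a suitable model domain) onto $\Hh$. Here the model metric is $\frac{4\theta^2|z|^{2\theta-2}|\dd z|^2}{(1-|z|^{2\theta})^2}$. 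In the integral case $\theta=m$, the function $z^m$ is single-valued on the punctured disc. Hence $f_p=A(z^m)=\frac{az^m+b}{cz^m+d}$ is a single-valued branch on $\{0<|z|<R_0\}$ with $ad-bc\neq0$. In particular, the peripheral monodromy is trivial.

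Next comes the extension. Since $A$ maps the unit disc to $\Hh$, the denominator $cz^m+d$ does not vanish for $|z|<1$. In particular $d\neq0$, because $z=0$ lies in the disc and $A(0)$ must be a finite point of $\Hh$. So $f_p$ extends holomorphically across $0$, with $f_p(0)=b/d$. This value lies in $\Hh$ because $0$ lies in the model domain and $A$ maps that domain into $\Hh$. Shrinking $R_0$ is only needed if the coordinate disc from the cited normal form was normalized differently; it also guarantees that $cz^m+d$ stays away from zero.

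Then we compute the derivative by the chain rule:
$$f_p'(z)=\frac{(ad-bc)\,m z^{m-1}}{(cz^m+d)^2}.$$
This gives $g_p(z)=\frac{m(ad-bc)}{(cz^m+d)^2}$, which is holomorphic near $0$ and satisfies $g_p(0)=m(ad-bc)/d^2\neq0$.

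The main obstacle is the first step: making sure the cited normal form actually applies in the form needed. The coordinate must realize the developing map exactly as $A(z^m)$, rather than just producing a metric that is asymptotic to the cone model. If the citation only provides the metric normal form $\dd s^2=\frac{4m^2|z|^{2m-2}|\dd z|^2}{(1-|z|^{2m})^2}$ in some coordinate, we argue as follows. Both $f_p$ and $\Phi\circ z^m$, where $\Phi$ is the Cayley map from the disc to $\Hh$, are developing maps of the same metric on a simply connected sector. By uniqueness of developing maps up to $\PSL(2,\R)$, they differ by postcomposition with an element of $\PSL(2,\R)$. Analytic continuation around the puncture then preserves the form $A(z^m)$, since $z^m$ is single-valued. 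This yields both the single-valuedness of the branch and the stated formula.
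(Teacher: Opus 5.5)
Your proof is correct and the derivative computation matches the paper's. The key step, showing $d\neq0$ and $f_p(0)\in\Hh$, is done differently.

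The paper treats the rational expression $\frac{az^m+b}{cz^m+d}$ from \cite[Lemma~5.3]{FengShiSongXu2023} as a black box, with arbitrary complex coefficients satisfying $ad-bc\neq0$. It gets $d\neq0$ and $\Im f_p(0)>0$ only from the fact that $f_p$ takes values in $\Hh$ on the punctured disc. If $d=0$, then $1/f_p=cz^m/(az^m+b)$ extends with value $0$ at the origin. It maps the punctured disc into the open lower half-plane, which contradicts the open mapping theorem. The same argument applied to $f_p$ itself rules out $f_p(0)\in\R$.

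You instead identify the M\"obius factor. The metric normal form of \cite{FengShiXu2020}, valid also for integral angles, together with uniqueness of developing maps up to $\PSL(2,\R)$, gives $A=B\circ\Phi$ with $B\in\PSL(2,\R)$ and $\Phi$ the Cayley map. So $A$ maps the unit disc biholomorphically onto $\Hh$, and $d\neq0$ and $A(0)=b/d\in\Hh$ follow at once. You also re-derive the single-valuedness of the branch (trivial peripheral monodromy) by continuing $B\circ\Phi\circ z^m$ around the puncture, rather than citing it.

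Your route is more self-contained about where the formula comes from. It relies on the finer information that the M\"obius factor sends the disc onto $\Hh$, and your final paragraph correctly supplies this. So the argument is complete even if the cited lemma records only the bare rational form. The paper's open-mapping argument is more robust: it does not depend on how the cited normal form is normalized, and it uses nothing beyond the rational form and $f_p(\{0<|z|<R_0\})\subset\Hh$.
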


\begin{proof}
The local expression is obtained in
\cite[Lemma~5.3 and its proof]{FengShiSongXu2023}. We only make explicit
the behavior at the origin.

First, $d\neq0$. Otherwise, $ad-bc\neq0$ implies $b\neq0$ and
$c\neq0$, and, after shrinking the coordinate disc if necessary,
$$
G(z)
=
\frac{1}{f_p(z)}
=
\frac{cz^m}{az^m+b}
$$
extends holomorphically across $z=0$ with $G(0)=0$. For $z\neq0$,
because $f_p(z)\in\Hh$,
$\Im G(z)=-\frac{\Im f_p(z)}{|f_p(z)|^2}<0.$
Thus the holomorphic extension of $G$ maps a neighborhood of $0$ into
the closed lower half-plane and sends the interior point $0$ to its
boundary. Since $G$ is nonconstant, this contradicts the open mapping
theorem. Hence $d\neq0$.

It follows that the rational expression extends holomorphically across
$z=0$, with $f_p(0)=\frac{b}{d}.$
Since $\Im f_p(z)>0$ for $z\neq0$, continuity gives
$\Im f_p(0)\geq0$. If $\Im f_p(0)=0$, then the nonconstant
holomorphic map $f_p$ would map a neighborhood of $0$ into the closed
upper half-plane and send the interior point $0$ to its boundary, again
contradicting the open mapping theorem. Therefore
$$
f_p(0)=\frac{b}{d}\in\Hh.
$$
Finally,
$$
f_p'(z)
=
\frac{m(ad-bc)z^{m-1}}{(cz^m+d)^2}.
$$
Thus
$f_p'(z)=z^{m-1}g_p(z),\
g_p(z)=\frac{m(ad-bc)}{(cz^m+d)^2}.$
Since $d\neq0$, the function $g_p$ is holomorphic near the origin and
$g_p(0)=\frac{m(ad-bc)}{d^2}\neq0.
$
\end{proof}

\begin{proposition}[Cone estimate]
\label{prop:cone}
Let $p\in \supp\mathrm{D}$ be a cone singularity of angle $2\pi m$, where
$m\geq2$, and write the local branch of
Lemma~\ref{lem:cone-extension} as
$f_p=u_p+{\rm i}v_p.$
Then there exist constants $R_p>0$ and $C_p>0$ such that
$$
|\alpha(z)|_0
\leq
C_p|z|^{m-1}
$$
for every $0<|z|\leq R_p$. Consequently,
$$
\int_{0<|z|<R_p}
|\alpha|_0^2\,\dd x\,\dd y
\leq
\frac{\pi C_p^2}{m}R_p^{2m}
<\infty.
$$
Moreover, let $\phi\in C^\infty(\{|z|<R_p\})$ be bounded. For
$0<\varepsilon<R_p$, set
$A_{\varepsilon,R_p}=\{z\in\C:\varepsilon<|z|<R_p\},$
and let $\nu_\varepsilon$ denote the outward Euclidean unit normal of
$A_{\varepsilon,R_p}$ along its inner boundary
$\{|z|=\varepsilon\}$. Then
$$
\left|
\int_{|z|=\varepsilon}
\phi^2\alpha(\nu_\varepsilon)\,\dd s_0
\right|
\leq
2\pi C_p
\|\phi\|_{L^\infty(|z|<R_p)}^2
\varepsilon^m,
$$
and hence the boundary flux tends to zero as
$\varepsilon\to0$.
\end{proposition}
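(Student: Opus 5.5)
We use the local form from Lemma~\ref{lem:cone-extension}. On the punctured disc the branch $f_p=u_p+{\rm i}v_p$ is single-valued, so $\alpha=\dd\log v_p=\dd v_p/v_p$ there. Since $f_p$ is holomorphic, $|\dd v_p|_0=|f_p'|$ by the Cauchy--Riemann equations. Hence
$$
|\alpha(z)|_0=\frac{|f_p'(z)|}{v_p(z)}=\frac{|z|^{m-1}|g_p(z)|}{v_p(z)}.
$$

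The first step is to choose $R_p$. We take $0<R_p<R_0$ small enough that the following two bounds hold on the closed disc $\{|z|\le R_p\}$:
\begin{itemize}
\item[(i)] $v_p\ge \tfrac12\Im f_p(0)>0$, which is possible by continuity because $f_p(0)=b/d\in\Hh$;
\item[(ii)] $|g_p|\le 2|g_p(0)|$, which is possible because $g_p$ is holomorphic near $0$.
\end{itemize}
Setting $C_p=4|g_p(0)|/\Im f_p(0)$ then gives $|\alpha(z)|_0\le C_p|z|^{m-1}$ for $0<|z|\le R_p$. This step, namely the positive lower bound on $v_p$ at the cone point, is the only substantive point. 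It has already been secured by the open-mapping argument in Lemma~\ref{lem:cone-extension}, so no real obstacle remains.

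Next we integrate in polar coordinates:
$$
\int_{0<|z|<R_p}|\alpha|_0^2\,\dd x\,\dd y\le \int_0^{2\pi}\!\!\int_0^{R_p} C_p^2 r^{2m-2}\,r\,\dd r\,\dd\vartheta=\frac{2\pi C_p^2}{2m}R_p^{2m}=\frac{\pi C_p^2}{m}R_p^{2m}.
$$

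For the flux we use the pointwise bound $|\alpha(\nu_\varepsilon)|\le|\alpha|_0\le C_p\varepsilon^{m-1}$ on the circle $|z|=\varepsilon$, together with $\dd s_0=\varepsilon\,\dd\vartheta$. This gives
$$
\left|\int_{|z|=\varepsilon}\phi^2\alpha(\nu_\varepsilon)\,\dd s_0\right|\le \|\phi\|_{L^\infty(|z|<R_p)}^2\,C_p\varepsilon^{m-1}\cdot 2\pi\varepsilon=2\pi C_p\|\phi\|_{L^\infty(|z|<R_p)}^2\,\varepsilon^m.
$$
Since $m\ge 2$, the right-hand side tends to $0$ as $\varepsilon\to0$.
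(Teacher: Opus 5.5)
Your proposal is correct and follows the paper's proof essentially step for step: the Cauchy--Riemann identity $|\alpha|_0=|f_p'|/v_p$, a positive lower bound on $v_p$ coming from $f_p(0)\in\Hh$, and then the same polar-coordinate integration and flux bound. The only cosmetic difference is that you bound $|g_p|$ by $2|g_p(0)|$ on a small disc, whereas the paper uses $M_p=\max_{|z|\le R_p}|g_p|$; either choice gives an admissible constant $C_p$.
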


\begin{proof}
By Lemma~\ref{lem:cone-extension}, the function
$f_p=u_p+{\rm i}v_p$
extends holomorphically across $z=0$, and
$v_p(0)=\Im f_p(0)>0.$
Moreover,
$f_p'(z)=z^{m-1}g_p(z),$
where $g_p$ is holomorphic near the origin.

By continuity of $v_p$, after decreasing the coordinate radius if
necessary, choose $R_p>0$ such that
$$
v_p(z)
\geq
c_p
:=
\frac12v_p(0)
>0
$$
for all $|z|\leq R_p$. We also choose $R_p$ so that $g_p$ is
holomorphic on a neighborhood of the closed disc
$\{|z|\leq R_p\}$. Define
$M_p=\max_{|z|\leq R_p}|g_p(z)|$
and
$C_p=\frac{M_p}{c_p}.$

On the punctured disc,
$\alpha=\dd\log v_p=\frac{\dd v_p}{v_p}.$
Since $f_p=u_p+{\rm i}v_p$ is holomorphic, the Cauchy--Riemann equations give
$(u_p)_x=(v_p)_y,\ (u_p)_y=-(v_p)_x,$
and hence
$$
|\dd v_p|_0^2
=
(v_p)_x^2+(v_p)_y^2
=
|f_p'(z)|^2.
$$
Therefore
$$
\begin{aligned}
|\alpha(z)|_0&=
\frac{|\dd v_p|_0}{v_p(z)}
=
\frac{|f_p'(z)|}{v_p(z)}
=
\frac{|z|^{m-1}|g_p(z)|}{v_p(z)}\\
&\leq
\frac{M_p}{c_p}|z|^{m-1}=
C_p|z|^{m-1}.
\end{aligned}
$$
Using polar coordinates $z=re^{i\vartheta}$,
$$
\begin{aligned}
\int_{0<|z|<R_p}
|\alpha|_0^2\,\dd x\,\dd y
&\leq
\int_0^{2\pi}\int_0^{R_p}
C_p^2r^{2m-2}\,r\,\dd r\,\dd\vartheta\\
&=
2\pi C_p^2
\int_0^{R_p}r^{2m-1}\,\dd r\\
&=
\frac{\pi C_p^2}{m}R_p^{2m}.
\end{aligned}
$$
It remains to estimate the boundary flux. Along the inner boundary
$\{|z|=\varepsilon\}$ of $A_{\varepsilon,R_p}$, the outward Euclidean
unit normal points toward the deleted disc $\{|z|<\varepsilon\}$.
Thus
$\nu_\varepsilon
=
-\partial_r.
$
Since $\nu_\varepsilon$ has Euclidean norm one,
$$
|\alpha(\nu_\varepsilon)|
\leq
|\alpha|_0
\leq
C_p\varepsilon^{m-1}.
$$
Moreover, the Euclidean arclength element is
$\dd s_0=\varepsilon\,\dd\vartheta$. Hence
$$
\begin{aligned}
\left|
\int_{|z|=\varepsilon}
\phi^2\alpha(\nu_\varepsilon)\,\dd s_0
\right|
&\leq
\int_{|z|=\varepsilon}
|\phi|^2|\alpha(\nu_\varepsilon)|\,\dd s_0\\
&\leq
\|\phi\|_{L^\infty(|z|<R_p)}^2
\int_0^{2\pi}
C_p\varepsilon^{m-1}
\varepsilon\,\dd\vartheta\\
&=
2\pi C_p
\|\phi\|_{L^\infty(|z|<R_p)}^2
\varepsilon^m.
\end{aligned}
$$
Since $m\geq2$, the right-hand side tends to zero as
$\varepsilon\to0$.
\end{proof}

\subsection{Local square-integrability and boundary flux}

The preceding cusp and cone estimates give precisely the local
regularity needed for the integration-by-parts argument in the next
section.

\begin{corollary}[Local square-integrability and vanishing flux]
\label{cor:local-L2}
Extending $\alpha$ arbitrarily across the measure-zero set $\supp\mathrm{D}$, one has
$$
\alpha
\in
L_{\mathrm{loc}}^2(\Sigma;T^*\Sigma,g).
$$
Equivalently, for every compact set $K\Subset\Sigma$,
$$
\int_{K\cap\Sigma^*}
|\alpha|_g^2\,\dd A_g
<\infty.
$$
Furthermore, let $p\in \supp\mathrm{D}$ and choose a coordinate disc
$B_p(R_p)$ on which the corresponding cusp or cone estimate above is
valid. For $0<\varepsilon<R_p$, let
$\nu_{g,\varepsilon}$ denote the outward $g$-unit normal of $B_p(R_p)\setminus\overline{B_p(\varepsilon)}$
along its inner boundary $\partial B_p(\varepsilon)$. Then, for every smooth function $\phi$ bounded near $p$,
$$
\lim_{\varepsilon\to0}
\int_{\partial B_p(\varepsilon)}
\phi^2
\alpha(\nu_{g,\varepsilon})\,\dd s_g
=
0.
$$
\end{corollary}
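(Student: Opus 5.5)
The plan is to reduce both assertions to the Euclidean estimates of Propositions~\ref{prop:cusp} and~\ref{prop:cone} by conformal invariance, together with a compactness argument for the global statement.

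\emph{Local square-integrability.} Fix a compact set $K\Subset\Sigma$. Since $\supp\mathrm{D}$ is closed and discrete, $K$ meets only finitely many singular points $p_1,\dots,p_N$. Around each $p_j$ we choose the coordinate disc $B_{p_j}(R_{p_j})$ furnished by Proposition~\ref{prop:cusp} or Proposition~\ref{prop:cone}, shrinking the radii so that the closed discs are pairwise disjoint and contain no other singular points. We then split
$$
K\cap\Sigma^*\subset\Bigl(K\setminus\bigcup_{j}B_{p_j}(R_{p_j}/2)\Bigr)\cup\bigcup_{j}\bigl(B_{p_j}(R_{p_j})\setminus\{p_j\}\bigr).
$$
The first set is a compact subset of $\Sigma^*$, on which $\alpha$ is smooth, so $|\alpha|_g^2$ is bounded there and its integral is finite. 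On each punctured disc we use the conformal invariance $|\alpha|_g^2\,\dd A_g=|\alpha|_0^2\,\dd x\,\dd y$. Writing $g=e^{2\sigma}|\dd z|^2$ in the coordinate $z$, this identity holds pointwise. Hence the integral equals the Euclidean integral, which is finite by the cusp formula $2\pi/|\log R|$ or by the cone bound $\pi C_p^2R_p^{2m}/m$. Summing these finitely many pieces gives the claim. The value assigned to $\alpha$ on $\supp\mathrm{D}$ is irrelevant, because this is a null set.

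\emph{Vanishing flux.} Fix $p$ and $B_p(R_p)$. By the flux invariance recorded earlier, we have $\alpha(\nu_{g,\varepsilon})\,\dd s_g=\alpha(\nu_\varepsilon)\,\dd s_0$ along $\{|z|=\varepsilon\}$, with the compatibly oriented (outward, inward-pointing toward $p$) normals. Concretely, $\nu_{g,\varepsilon}=e^{-\sigma}\nu_\varepsilon$ and $\dd s_g=e^{\sigma}\dd s_0$. The one point to check is that the "coordinate disc" $B_p(\varepsilon)$ in the corollary means the coordinate disc $\{|z|<\varepsilon\}$, rather than a $g$-geodesic ball. I will adopt this reading, since it is how the disc is defined via the chosen coordinate. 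The integrand $\phi^2$ is unchanged, and $\phi$ is smooth and bounded on $\{|z|<R_p\}$, possibly after shrinking $R_p$. The integral therefore coincides with the Euclidean flux of Proposition~\ref{prop:cusp} or Proposition~\ref{prop:cone}. That flux is bounded by $2\pi\|\phi\|_\infty^2/|\log\varepsilon|$ or by $2\pi C_p\|\phi\|_\infty^2\varepsilon^m$, and both bounds tend to $0$.

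\emph{Main obstacle.} The only step requiring care is the conformal change of the flux term. One must verify that the $g$-unit normal and the Euclidean unit normal are positive multiples of each other, which holds because $g$ is conformal to $|\dd z|^2$. The scaling factors $e^{\pm\sigma}$ then cancel exactly, and everything else is a direct citation of the earlier propositions.
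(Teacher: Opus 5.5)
Your proposal is correct and follows essentially the same route as the paper. Local finiteness comes from the conformal invariance $|\alpha|_g^2\,\dd A_g=|\alpha|_0^2\,\dd x\,\dd y$ together with the cusp and cone energy estimates, summed over the finitely many singular points meeting $K$; the flux statement comes from $\nu_{g,\varepsilon}=e^{-\sigma}\nu_{0,\varepsilon}$ and $\dd s_g=e^{\sigma}\dd s_0$ combined with the Euclidean flux bounds. Your explicit splitting of $K\cap\Sigma^*$ into a compact subset of $\Sigma^*$ plus finitely many punctured discs simply spells out what the paper calls ``summing the local contributions.''
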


\begin{proof}
Let $K\Subset\Sigma$ be compact. Since $\supp\mathrm{D}$ is closed and discrete,
$K\cap \supp\mathrm{D}$ is finite. The one-form $\alpha$ is smooth away from these
finitely many points. Near each cusp, Proposition~\ref{prop:cusp} gives
finite Euclidean energy, while near each cone point,
Proposition~\ref{prop:cone} gives finite Euclidean energy.

If $g=e^{2\sigma}g_0$ in a local holomorphic coordinate, then
$$
|\alpha|_g^2\,\dd A_g
=
|\alpha|_0^2\,\dd A_0.
$$
Thus the Euclidean energy estimates obtained above are exactly the
corresponding energy estimates for the background metric $g$. Since only
finitely many singular points meet $K$, summing the local contributions
gives
$$
\int_{K\cap\Sigma^*}
|\alpha|_g^2\,\dd A_g
<\infty.
$$

For the boundary-flux statement, in the same local coordinate write
$g=e^{2\sigma}g_0$. Along $\partial B_p(\varepsilon)$, let
$\nu_{0,\varepsilon}$ be the Euclidean unit normal with the same
orientation as $\nu_{g,\varepsilon}$. Then
$\nu_{g,\varepsilon}=e^{-\sigma}\nu_{0,\varepsilon},\ \dd s_g=e^\sigma\dd s_0.$
Consequently,
$$
\alpha(\nu_{g,\varepsilon})\,\dd s_g
=
\alpha(\nu_{0,\varepsilon})\,\dd s_0.
$$
The desired limit now follows directly from the flux estimates in
Propositions~\ref{prop:cusp} and \ref{prop:cone}.
\end{proof}

\section{The compactly supported energy inequality}
\label{sec:energy-inequality}

We now integrate the nonlinear divergence identity obtained in
Lemma~\ref{lem:global-divergence}. Since $\alpha$ is defined on
$\Sigma^*=\Sigma\setminus \supp\mathrm{D}$, we first remove small coordinate discs
around the singular points and apply the divergence theorem on the
resulting compact surface with boundary. The local estimates of
Section~\ref{sec:local-estimates} ensure that the boundary contributions
from the deleted discs vanish as their radii tend to zero.

\begin{proposition}[Compactly supported energy inequality]
\label{prop:energy-inequality}
For every real-valued function $\phi\in C_c^\infty(\Sigma)$,
$$
\int_{\Sigma^*}\phi^2|\alpha|_g^2\,\dd A_g
\leq
4\int_{\Sigma}|\dd\phi|_g^2\,\dd A_g.
$$
\end{proposition}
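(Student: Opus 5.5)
The plan is to integrate the divergence identity of Lemma~\ref{lem:global-divergence} against $\phi^2$, then absorb the cross term by Cauchy--Schwarz. Fix a real $\phi\in C_c^\infty(\Sigma)$. Since $\supp\phi$ is compact and $\supp\mathrm{D}$ is closed and discrete, only finitely many singular points $p_1,\dots,p_N$ meet $\supp\phi$. Around each $p_j$ choose the coordinate disc $B_{p_j}(R_{p_j})$ from Proposition~\ref{prop:cusp} or Proposition~\ref{prop:cone}, shrinking radii so the discs are pairwise disjoint. For $0<\varepsilon<\min_j R_{p_j}$ set
$$
\Omega_\varepsilon=\Sigma\setminus\bigcup_{j=1}^N\overline{B_{p_j}(\varepsilon)} .
$$
Then $\phi^2\alpha^{\sharp_g}$ is a smooth compactly supported vector field on $\Omega_\varepsilon$ up to the boundary circles.

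Next apply the divergence theorem on $\Omega_\varepsilon\cap\supp\phi$. The boundary contributions come only from the circles $\partial B_{p_j}(\varepsilon)$, since $\phi$ vanishes near the rest of the boundary. Using $\Div_g(\phi^2\alpha^{\sharp_g})=2\phi\,\alpha(\nabla_g\phi)+\phi^2\Div_g(\alpha^{\sharp_g})$ together with Lemma~\ref{lem:global-divergence}, we get
$$
\int_{\Omega_\varepsilon}\phi^2|\alpha|_g^2\,\dd A_g
=2\int_{\Omega_\varepsilon}\phi\,\langle\alpha,\dd\phi\rangle_g\,\dd A_g-\sum_{j=1}^N\int_{\partial B_{p_j}(\varepsilon)}\phi^2\alpha(\nu_{g,\varepsilon})\,\dd s_g ,
$$
where $\nu_{g,\varepsilon}$ is the outward normal of $\Omega_\varepsilon$. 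This agrees with the normal in Corollary~\ref{cor:local-L2}, since both point into the deleted disc. The sign bookkeeping here must be checked carefully, but only the vanishing of the boundary term will matter.

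Now let $\varepsilon\to0$. Each boundary integral tends to zero by Corollary~\ref{cor:local-L2}. The left side increases to $\int_{\Sigma^*}\phi^2|\alpha|_g^2\,\dd A_g$ by monotone convergence. The cross term converges by dominated convergence, because $|\phi\langle\alpha,\dd\phi\rangle_g|\le|\phi||\dd\phi|_g|\alpha|_g$ is integrable on $\supp\phi$: $\alpha\in L^2_{\mathrm{loc}}$ by Corollary~\ref{cor:local-L2}, and $\phi,\dd\phi$ are bounded with compact support. In particular $I:=\int_{\Sigma^*}\phi^2|\alpha|_g^2\,\dd A_g$ is finite. Cauchy--Schwarz then gives
$$
I\le 2\Bigl(\int_{\Sigma^*}\phi^2|\alpha|_g^2\,\dd A_g\Bigr)^{1/2}\Bigl(\int_\Sigma|\dd\phi|_g^2\,\dd A_g\Bigr)^{1/2}=2I^{1/2}\|\dd\phi\|_{L^2}.
$$
If $I=0$ there is nothing to prove. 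Otherwise, dividing by $I^{1/2}$ and squaring gives $I\le4\|\dd\phi\|_{L^2}^2$.

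The main obstacle is justifying the limit $\varepsilon\to0$: the flux through the small circles must vanish and the cross term must converge. Both are already supplied by the local cusp and cone estimates through Corollary~\ref{cor:local-L2}. The essential point is that finiteness of $I$ is obtained before dividing, and this comes from local square-integrability rather than being assumed.
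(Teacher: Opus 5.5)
Your proposal is correct and follows essentially the same route as the paper. You excise small discs around the finitely many relevant singular points, apply the divergence theorem to $\phi^2\alpha^{\sharp_g}$ using Lemma~\ref{lem:global-divergence}, and kill the inner fluxes with the cusp and cone estimates. You then pass to the limit by monotone and dominated convergence using $\alpha\in L^2_{\mathrm{loc}}$, and conclude by Cauchy--Schwarz after checking $I<\infty$. The only difference is technical: the paper works on the compact surface $\overline U\setminus\bigcup_j B_j(\varepsilon)$ for a smooth relatively compact domain $U\Supset\supp\phi$. You instead apply the divergence theorem to a compactly supported field on the manifold with boundary $\Sigma\setminus\bigcup_j B_{p_j}(\varepsilon)$, extending $\phi^2\alpha^{\sharp_g}$ by zero near singular points outside $\supp\phi$. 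That is the right way to phrase your ``$\Omega_\varepsilon\cap\supp\phi$'', since the latter set need not be a smooth domain.
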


\begin{proof}
Choose a relatively compact smooth domain $U\Subset\Sigma$ such that
$\supp\phi\Subset U$
and
$\partial U\cap \supp\mathrm{D}=\varnothing.$
If $\Sigma$ is compact and $\supp\phi=\Sigma$, we simply take
$U=\Sigma$, in which case $\partial U=\varnothing$. Otherwise, since
$\supp\phi$ is compactly contained in $U$, the function $\phi$ vanishes
on a neighborhood of $\partial U$.

Because $\overline U$ is compact and $\supp\mathrm{D}$ is closed and discrete, the set
$\supp\mathrm{D}\cap\overline U$ is finite. Since $\partial U\cap \supp\mathrm{D}=\varnothing$, we
may write
$\supp\mathrm{D}\cap\overline U=\supp\mathrm{D}\cap U=\{p_1,\ldots,p_N\}.$
If $N=0$, no truncation at the singular set is needed; the argument
below is applied directly to $\overline U$, and there are no inner
boundary terms. We therefore assume $N\geq1$.

Choose pairwise disjoint coordinate discs
$$
B_j(R_j)=\{|z_j|<R_j\},
\qquad
\overline{B_j(R_j)}\Subset U,
$$
centered at $p_j$, with the radii chosen sufficiently small that the
local estimates of Propositions~\ref{prop:cusp} and \ref{prop:cone}
apply. For
$0<\varepsilon<\min_{1\leq j\leq N}R_j,$
define
$$
M_\varepsilon
=
\overline U
\setminus
\bigcup_{j=1}^{N}B_j(\varepsilon),
$$
where $B_j(\varepsilon)=\{|z_j|<\varepsilon\}$ is the open coordinate
disc.

The set $M_\varepsilon$ is a compact smooth surface with boundary. Indeed,
it is a closed subset of the compact set $\overline U$, and the deleted
discs have pairwise disjoint closures contained in the interior of $U$.
Its boundary is
$\partial M_\varepsilon=\partial U\sqcup\bigsqcup_{j=1}^{N}\partial B_j(\varepsilon).$
All singular points have been removed, so $\alpha$ is smooth on a
neighborhood of $M_\varepsilon$ in $\Sigma^*$.

Set $X=\alpha^{\sharp_g}.$
By Lemma~\ref{lem:global-divergence},
$\Div_gX=-|\alpha|_g^2.$
Since the metric dual of $X$ is $\alpha$, the product rule for divergence
gives
$$
\Div_g(\phi^2X)
=
2\phi\langle\dd\phi,\alpha\rangle_g
-
\phi^2|\alpha|_g^2.
$$
Applying the divergence theorem to the smooth vector field $\phi^2X$ on
the compact manifold $M_\varepsilon$, we obtain
$$
\int_{M_\varepsilon}
\Div_g(\phi^2X)\,\dd A_g
=
\int_{\partial M_\varepsilon}
\phi^2\alpha(\nu_g)\,\dd s_g,
$$
where $\nu_g$ denotes the outward $g$-unit normal to
$\partial M_\varepsilon$. Hence
$$
\begin{aligned}
\int_{M_\varepsilon}
\phi^2|\alpha|_g^2\,\dd A_g
={}&
2\int_{M_\varepsilon}
\phi\langle\dd\phi,\alpha\rangle_g\,\dd A_g-
\int_{\partial M_\varepsilon}
\phi^2\alpha(\nu_g)\,\dd s_g.
\end{aligned}
$$
The contribution from the outer boundary $\partial U$ is zero because
$\phi$ vanishes on a neighborhood of $\partial U$. It remains to consider
the inner boundary circles.

Along $\partial B_j(\varepsilon)$, the outward normal of
$M_\varepsilon$ points toward the deleted disc
$B_j(\varepsilon)$. In the local Euclidean coordinate $z_j$, this is the
same orientation as the outward normal used in
Propositions~\ref{prop:cusp} and \ref{prop:cone}. Moreover, the flux
measure of a one-form is conformally invariant in real dimension two:
$\alpha(\nu_g)\,\dd s_g=\alpha(\nu_0)\,\dd s_0.$
We may therefore apply the Euclidean flux estimates obtained in
Section~\ref{sec:local-estimates}.

If $p_j$ is a cusp, Proposition~\ref{prop:cusp} gives
$$
\left|
\int_{\partial B_j(\varepsilon)}
\phi^2\alpha(\nu_g)\,\dd s_g
\right|
\leq
\frac{
2\pi\|\phi\|_{L^\infty(B_j(R_j))}^2
}{
|\log\varepsilon|
},
$$
which tends to zero as $\varepsilon\to0$. If $p_j$ is a cone point of
angle $2\pi m_j$, Proposition~\ref{prop:cone} gives
$$
\left|
\int_{\partial B_j(\varepsilon)}
\phi^2\alpha(\nu_g)\,\dd s_g
\right|
\leq
2\pi C_{p_j}
\|\phi\|_{L^\infty(B_j(R_j))}^2
\varepsilon^{m_j},
$$
which also tends to zero. Since only finitely many singular points occur
in $U$, it follows that
$$
\int_{\partial M_\varepsilon}
\phi^2\alpha(\nu_g)\,\dd s_g
\longrightarrow0
$$
as $\varepsilon\to0$.

We next pass to the limit in the volume terms. By
Corollary~\ref{cor:local-L2},
$\alpha\in L^2_{\mathrm{loc}}(\Sigma;T^*\Sigma,g).$
Since $\phi$ and $\dd\phi$ have compact support in $U$, both
$\phi^2|\alpha|_g^2$
and
$|\phi|\,|\dd\phi|_g\,|\alpha|_g$
are integrable on $U$. Indeed, the latter follows from
Cauchy--Schwarz:
$$
\int_U
|\phi|\,|\dd\phi|_g\,|\alpha|_g\,\dd A_g
\leq
\left(
\int_U\phi^2|\alpha|_g^2\,\dd A_g
\right)^{1/2}
\left(
\int_U|\dd\phi|_g^2\,\dd A_g
\right)^{1/2}
<\infty.
$$
Choose a sequence $\varepsilon_k\downarrow0$. Then the sets
$M_{\varepsilon_k}$ increase to $\overline U\setminus \supp\mathrm{D}$. Since
$\partial U$ has zero two-dimensional measure, monotone convergence
applies to the nonnegative energy term, while dominated convergence
applies to the mixed term. Together with the vanishing of the boundary
flux, this yields
$$
\int_{U\setminus S}
\phi^2|\alpha|_g^2\,\dd A_g
=
2\int_{U\setminus S}
\phi\langle\dd\phi,\alpha\rangle_g\,\dd A_g.
$$
Since $\phi$ and $\dd\phi$ vanish outside $U$, this may be written as
$$
\int_{\Sigma^*}
\phi^2|\alpha|_g^2\,\dd A_g
=
2\int_{\Sigma^*}
\phi\langle\dd\phi,\alpha\rangle_g\,\dd A_g.
$$

The left-hand side is finite. Taking absolute values and applying the
Cauchy--Schwarz inequality, we obtain
$$
\begin{aligned}
\int_{\Sigma^*}
\phi^2|\alpha|_g^2\,\dd A_g
&\leq
2\int_{\Sigma^*}
|\phi|\,|\dd\phi|_g\,|\alpha|_g\,\dd A_g\\
&\leq
2
\left(
\int_{\Sigma^*}
\phi^2|\alpha|_g^2\,\dd A_g
\right)^{1/2}
\left(
\int_{\Sigma^*}
|\dd\phi|_g^2\,\dd A_g
\right)^{1/2}.
\end{aligned}
$$
Since the closed discrete set $\supp\mathrm{D}$ has zero area measure,
$\int_{\Sigma^*}|\dd\phi|_g^2\,\dd A_g=\int_{\Sigma}|\dd\phi|_g^2\,\dd A_g.$
If
$\int_{\Sigma^*}\phi^2|\alpha|_g^2\,\dd A_g=0,
$
the desired inequality is immediate. Otherwise, dividing by the square
root of this integral and then squaring gives
$$
\int_{\Sigma^*}
\phi^2|\alpha|_g^2\,\dd A_g
\leq
4\int_{\Sigma}
|\dd\phi|_g^2\,\dd A_g.
$$
This proves the proposition.
\end{proof}

\section{Parabolic cutoffs and exclusion of affine monodromy}
\label{sec:parabolic-cutoffs}

We now combine the compactly supported energy inequality with the
capacity characterization of parabolicity. The same energy estimate will
be used in both the compact and the noncompact parabolic cases, with
different choices of test functions.

\subsection{Capacity and zero-energy cutoff functions}

Let $\Sigma$ be a noncompact Riemann surface and let $g$ be a smooth
Riemannian metric compatible with its complex structure. For a compact
set $K\Subset\Sigma$, recall that its capacity relative to infinity is
defined by
$$
\capacity_{\Sigma}(K)
=
\inf\left\{
\int_{\Sigma}|\dd\psi|_g^2\,\dd A_g:
\psi\in C_c^\infty(\Sigma),\
\psi\geq1
\text{ on a neighborhood of }K
\right\}.
$$
The Dirichlet integral is conformally invariant in real dimension two,
so $\capacity_{\Sigma}(K)$ depends only on the conformal structure of
$\Sigma$. A noncompact Riemann surface is parabolic if and only if every
compact set with nonempty interior has zero capacity relative to infinity; see \cite[Corollary~3.1 and Theorem~4.2, pp.~130--131]
{TroyanovParabolicity}.

We shall use this characterization in the following form.

\begin{lemma}[Parabolic cutoff functions]
\label{lem:parabolic-cutoff}
Let $\Sigma$ be a noncompact parabolic Riemann surface. Then there exist
compact sets $K_j\subset\Sigma$ with smooth boundary such that
$K_j\Subset\operatorname{int}K_{j+1},\ \bigcup_{j=1}^{\infty}K_j=\Sigma,$
and functions $\phi_j\in C_c^\infty(\Sigma)$ satisfying
$$
0\leq\phi_j\leq1,
\qquad
\phi_j\equiv1
\text{ on a neighborhood of }K_j,
$$
and
$$
\int_{\Sigma}|\dd\phi_j|_g^2\,\dd A_g
<
\frac{1}{j}.
$$
Consequently, for every compact set $K\Subset\Sigma$, one has
$\phi_j\equiv1$ on $K$ for all sufficiently large $j$.
\end{lemma}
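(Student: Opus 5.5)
We first build a smooth compact exhaustion and then use the capacity characterization, taking a diagonal choice of test functions and truncating them to $[0,1]$.

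\emph{Step 1: exhaustion.} Since $\Sigma$ is a noncompact connected surface, it is second countable; fix a smooth proper exhaustion function $\tau:\Sigma\to[0,\infty)$. By Sard's theorem we can pick regular values $t_1<t_2<\cdots\to\infty$. Put $K_j'=\{\tau\le t_j\}$. These are compact sets with smooth boundary, they satisfy $K_j'\Subset\operatorname{int}K_{j+1}'$, and their union is $\Sigma$. Each $K_j'$ has nonempty interior once $t_j>\min\tau$, which we may assume by discarding finitely many indices.

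\emph{Step 2: test functions from capacity.} Each $K_j'$ is compact with nonempty interior, so the cited characterization (Troyanov) gives $\capacity_\Sigma(K_j')=0$. Hence there is $\psi_j\in C_c^\infty(\Sigma)$ with $\psi_j\ge1$ on a neighborhood $V_j$ of $K_j'$ and
$$\int_\Sigma|\dd\psi_j|_g^2\,\dd A_g<\frac{1}{2j}.$$

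\emph{Step 3: smooth truncation.} The cutoffs must take values in $[0,1]$ and be smooth, so we cannot use $\min(\max(\psi_j,0),1)$ directly. Instead choose a smooth nondecreasing $\eta:\R\to[0,1]$ with $\eta(s)=0$ for $s\le\delta$, $\eta(s)=1$ for $s\ge1$, and $|\eta'|\le 1+\delta'$, where $\delta,\delta'>0$ are small and $\delta$ is chosen so that such an $\eta$ exists. Set $\phi_j=\eta\circ\psi_j$. Then:
\begin{itemize}[label={}]
\item $\phi_j$ is smooth and $0\le\phi_j\le1$;
\item $\supp\phi_j\subset\supp\psi_j$ is compact, because $\eta$ vanishes near $0$;
\item $\phi_j\equiv1$ on $V_j$;
\item by the chain rule, $|\dd\phi_j|_g^2\le(1+\delta')^2|\dd\psi_j|_g^2$.
\end{itemize}
Taking $(1+\delta')^2<2$ gives Dirichlet energy $<1/j$. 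We then set $K_j=K_j'$.

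\emph{Step 4: the final assertion.} Any compact $K\Subset\Sigma$ is covered by the increasing open sets $\operatorname{int}K_j$, so $K\subset K_j$ for all large $j$. On such $K_j$ we have $\phi_j\equiv1$, which proves the last claim.

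\emph{Main obstacle.} The only delicate point is the regularity and range issue in Step 3: truncation must keep the functions smooth and compactly supported while increasing the energy by at most a constant factor. The smooth $\eta$ with slope close to $1$ handles this. A fallback is to use Lipschitz truncation followed by mollification, which also changes the energy only slightly. Everything else follows from the cited capacity characterization of parabolicity.
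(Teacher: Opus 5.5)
Your proposal is correct and follows the paper's proof essentially step for step. Both arguments use an exhaustion by regular sublevel sets of a smooth exhaustion function $\tau$, obtain zero capacity of each $K_j$ from Troyanov's characterization, and compose with a smooth monotone truncation to get values in $[0,1]$ while increasing the energy by at most a constant factor. The only difference is bookkeeping: you use slope at most $1+\delta'$ with an energy budget of $1/(2j)$, while the paper uses slope at most $2$ with a budget of $1/(4j)$, and this is immaterial.
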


\begin{proof}
By \cite[Proposition~2.28, p.~46]{LeeSmooth}, choose a smooth positive
exhaustion function
$$
\tau:\Sigma\longrightarrow[0,\infty).
$$
Thus $\tau^{-1}([0,c])$ is compact for every $c<\infty$. Since $\Sigma$ is noncompact,
$\tau$ is necessarily unbounded. By Sard's theorem, the regular values
of $\tau$ are dense. Hence we may choose a strictly increasing sequence
of regular values
$$
c_1<c_2<c_3<\cdots,
\qquad
c_j\longrightarrow\infty,
$$
with $c_1$ sufficiently large that the corresponding sublevel set has
nonempty interior, and define
$$
K_j:=\tau^{-1}([0,c_j])=\{x\in\Sigma:\tau(x)\leq c_j\}.
$$
Each $K_j$ is compact by the exhaustion property. Since $c_j$ is a
regular value,
$\partial K_j=\tau^{-1}(c_j)$
is smooth, and $\operatorname{int}K_j=\{\tau<c_j\}.$
Since $c_j<c_{j+1}$,
$K_j\Subset\{\tau<c_{j+1}\}=\operatorname{int}K_{j+1}.$
Moreover, $c_j\to\infty$ implies $\bigcup_{j=1}^{\infty}K_j=\Sigma.$
Since $\Sigma$ is parabolic,
$\capacity_{\Sigma}(K_j)=0.$
Hence for each $j$ there exists
$\psi_j\in C_c^\infty(\Sigma)$ such that
$\psi_j\geq1$
on a neighborhood of $K_j$ and
$$
\int_{\Sigma}|\dd\psi_j|_g^2\,\dd A_g
<
\frac{1}{4j}.
$$

Choose once and for all a smooth nondecreasing function
$T:\R\longrightarrow[0,1]$
such that
$$
T(t)=0
\quad\text{for }t\leq0,
\qquad
T(t)=1
\quad\text{for }t\geq1,
$$
and
$0\leq T'(t)\leq2.$
Set
$$
\phi_j
=
T\circ\psi_j.
$$
Since $\psi_j$ is compactly supported and $T(0)=0$, the function
$\phi_j$ belongs to $C_c^\infty(\Sigma)$. Moreover,
$0\leq\phi_j\leq1,$
and $\phi_j\equiv1$ on a neighborhood of $K_j$. By the chain rule,
$\dd\phi_j=T'(\psi_j)\dd\psi_j,$
and therefore
$$
\begin{aligned}
\int_{\Sigma}|\dd\phi_j|_g^2\,\dd A_g
\leq
4\int_{\Sigma}|\dd\psi_j|_g^2\,\dd A_g<
\frac{1}{j}.
\end{aligned}
$$

Finally, let $K\Subset\Sigma$ be compact. Since $\tau$ is bounded on
$K$ and $c_j\to\infty$, one has $K\subset K_j$ for all sufficiently
large $j$. Hence $\phi_j\equiv1$ on $K$ for all such $j$.
\end{proof}

\subsection{Exclusion of affine monodromy}

We can now rule out affine monodromy on every non-hyperbolic Riemann
surface.

\begin{theorem}[Nonexistence of affine monodromy]
\label{thm:no-affine}
Let $\Sigma$ be a Riemann surface which is either compact or noncompact
and parabolic, and let $\dd s^2$ be a singular hyperbolic metric on
$\Sigma$ with closed discrete singular set. Then the monodromy group of
$\dd s^2$ is not contained in any conjugate of the affine subgroup $L$.
\end{theorem}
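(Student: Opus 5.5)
We argue by contradiction. Suppose the monodromy is contained in a conjugate of $L$. After postcomposing the developing map with a suitable element of $\PSL(2,\R)$, as in Section~\ref{sec:geometric-setup}, we may assume $\rho(\pi_1(\Sigma^*))\subset L$. Then the global closed one-form $\alpha$ of Section~\ref{sec:differential-identity} is defined. Fix a smooth metric $g$ compatible with the complex structure. The plan is to prove $\int_{\Sigma^*}|\alpha|_g^2\,\dd A_g=0$ using Proposition~\ref{prop:energy-inequality}, and then derive a contradiction with local univalence.

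\textbf{Compact case.} Take $\phi\equiv1\in C_c^\infty(\Sigma)$. Then $\dd\phi=0$, and Proposition~\ref{prop:energy-inequality} gives $\int_{\Sigma^*}|\alpha|_g^2\,\dd A_g\le0$. Its proof explicitly allowed $U=\Sigma$ with empty outer boundary.

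\textbf{Noncompact parabolic case.} Take the cutoffs $\phi_j$ of Lemma~\ref{lem:parabolic-cutoff}. For any compact $K\Subset\Sigma$ we have $\phi_j\equiv1$ on $K$ for large $j$. Hence
$$
\int_{K\cap\Sigma^*}|\alpha|_g^2\,\dd A_g\le\int_{\Sigma^*}\phi_j^2|\alpha|_g^2\,\dd A_g\le 4\int_\Sigma|\dd\phi_j|_g^2\,\dd A_g<\frac4j .
$$
Letting $j\to\infty$ and then exhausting $\Sigma$ by the $K_j$, monotone convergence gives $\int_{\Sigma^*}|\alpha|_g^2\,\dd A_g=0$.

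\textbf{Contradiction.} In either case $\alpha$ is smooth on $\Sigma^*$ and its squared norm integrates to zero, so $\alpha\equiv0$ on $\Sigma^*$. Take any simply connected evenly covered open set $U\subset\Sigma^*$ and the branch $f_U=u_U+{\rm i}v_U$. Then $\dd\log v_U=\alpha|_U=0$, so $v_U$ is constant on $U$. By the Cauchy--Riemann equations $|f_U'|^2=|\dd v_U|_0^2=0$, so $f_U$ is constant. This contradicts the local univalence of the developing map; equivalently, $f_U^*\dd s_{\Hh}^2$ would vanish instead of being the metric $\dd s^2$. Note that $\Sigma^*$ is nonempty, since $\supp\mathrm{D}$ is discrete.

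The only delicate point is the limiting argument in the parabolic case. It is handled by the fact that Proposition~\ref{prop:energy-inequality} holds for every compactly supported test function, with a constant independent of the support, while the cutoffs have Dirichlet energy tending to zero. All the genuine analytic difficulty — the singular points and the vanishing of the boundary flux — has already been absorbed into Proposition~\ref{prop:energy-inequality} and Corollary~\ref{cor:local-L2}. The main theorem then follows by combining this result with Proposition~\ref{prop:subgroup-reduction} and the exclusions of $H_2'$- and $H_3$-metrics in \cite{FengShiSongXu2023}.
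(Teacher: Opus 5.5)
Your proposal is correct and follows the paper's proof essentially step for step: you apply the energy inequality with $\phi\equiv1$ in the compact case and with the parabolic cutoffs $\phi_j$ in the noncompact case, conclude $\alpha\equiv0$, and derive a contradiction with local univalence via the Cauchy--Riemann equations. The differences are cosmetic. You exhaust by compact sets to show that the global integral vanishes, whereas the paper argues on small neighborhoods $V\Subset\Sigma^*$. You also obtain the contradiction on a local branch $f_U$, whereas the paper works on the universal cover.
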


\begin{proof}
Suppose, to the contrary, that the monodromy group is contained in a
conjugate of $L$. After postcomposing the developing map with a suitable
element of $\PSL(2,\R)$, we may assume that the image of the chosen
monodromy representation is contained in $L$. Let $\alpha$ be the global
one-form constructed in Section~\ref{sec:differential-identity}.

Suppose first that $\Sigma$ is compact. Since the constant function
$\phi\equiv1$ belongs to $C_c^\infty(\Sigma)$,
Proposition~\ref{prop:energy-inequality} gives
$$
0
\leq
\int_{\Sigma^*}|\alpha|_g^2\,\dd A_g
\leq
4\int_{\Sigma}|\dd1|_g^2\,\dd A_g
=
0.
$$
Thus
$\int_{\Sigma^*}|\alpha|_g^2\,\dd A_g=0.$
Since $|\alpha|_g^2$ is continuous and nonnegative on $\Sigma^*$, it
follows that
$\alpha\equiv0$
on $\Sigma^*$.

Now suppose that $\Sigma$ is noncompact and parabolic. Let
$K_j$ and $\phi_j$ be given by
Lemma~\ref{lem:parabolic-cutoff}. Applying
Proposition~\ref{prop:energy-inequality} to $\phi_j$, we obtain
$$
\int_{\Sigma^*}
\phi_j^2|\alpha|_g^2\,\dd A_g
\leq
4\int_{\Sigma}
|\dd\phi_j|_g^2\,\dd A_g
<
\frac{4}{j}.
$$

Fix $x\in\Sigma^*$. Choose a relatively compact coordinate
neighborhood $V$ of $x$ such that
$\overline V\Subset\Sigma^*.$
For all sufficiently large $j$, one has
$\overline V\subset K_j,$
and hence $\phi_j\equiv1$ on $\overline V$. Therefore
$$
0
\leq
\int_V|\alpha|_g^2\,\dd A_g
\leq
\int_{\Sigma^*}
\phi_j^2|\alpha|_g^2\,\dd A_g
<
\frac{4}{j}.
$$
Letting $j\to\infty$ gives
$\int_V|\alpha|_g^2\,\dd A_g=0.$
Since $|\alpha|_g^2$ is continuous and nonnegative on $V$, it follows
that $\alpha=0$ on $V$. As $x\in\Sigma^*$ was arbitrary,
$\alpha\equiv0$
on $\Sigma^*$.

Thus in either case, $\alpha\equiv0.$
Pulling back to the universal cover yields
$\dd\log\widetilde v=\pi^*\alpha=0.$
Since the universal cover is connected,
$\log\widetilde v$, and hence $\widetilde v$, is constant. Writing
$\widetilde f=\widetilde u+{\rm i}\widetilde v,$
the Cauchy--Riemann equations imply that $\widetilde u$ is locally
constant and hence constant. Therefore $\widetilde f$ is constant,
contradicting the local univalence of the developing map.

This contradiction proves the theorem.
\end{proof}

Combining Theorem~\ref{thm:no-affine} with Proposition~\ref{prop:subgroup-reduction} and \cite[Theorem~1.6]{FengShiSongXu2023}, we conclude that the monodromy group of every singular hyperbolic metric on a non-hyperbolic Riemann surface is Zariski dense in $\PSL(2,\R)$. This proves Theorem~\ref{thm:main-intro}, and hence Conjecture~1.7 in \cite{FengShiSongXu2023}.

\section*{Acknowledgements}
The authors acknowledge ChatGPT’s assistance in the initial exploration of the proof of Theorem 1.2. Inspired by interactions with Y.F. in May 2026, they rigorously developed and verified the argument, assuming full responsibility for the paper.

Y.S. is supported in part by the National Natural Science Foundation of China (Grant No. 11931009) and the Innovation Program for Quantum Science and Technology (Grant No. 2021ZD0302902). 
J.S. is partially supported by the National Natural Science Foundation of China (Grant Nos. 12001399, 11831013 and 12171352) and the International Postdoctoral Exchange Fellowship Program by the Office of China Postdoctoral Council (No. PC2021053). 
B.X. is supported in part by the Project of Stable Support for Youth Team in Basic Research Field, CAS (Grant No. YSBR-001) and NSFC (Grant Nos. 12271495, 11971450 and 12071449).

\end{document}